\theoremstyle{plain}
\newtheorem{theorem}{Theorem}[section]
\newtheorem{lemma}[theorem]{Lemma}
\theoremstyle{definition}
\newtheorem{definition}[theorem]{Definition}
\theoremstyle{remark}
\newtheorem{remark}[theorem]{Remark}
\newtheorem{example}[theorem]{Example}
\DeclareRobustCommand{\rchi}{{\mathpalette\irchi\relax}}
\newcommand{\irchi}[2]{\raisebox{\depth}{$#1\chi$}} % inner command, used by \rchi
\tikzset{negated/.style={
        decoration={markings,
            mark= at position 0.5 with {
                \node[transform shape] (tempnode) {${\scriptstyle\setminus} $};
            }
        },
        postaction={decorate}
    }
}
\tikzset{degil/.style={
            decoration={markings,
            mark= at position 0.5 with {
                  \node[transform shape] (tempnode) {$\backslash$};
                  %\draw[thick] (tempnode.north east) -- (tempnode.south west);
                  }
              },
              postaction={decorate}
}
}
\title[Piecewise contractions and $b$-adic expansions]{Piecewise contractions and $b$-adic expansions}
\subjclass[2000]{Primary 37B10, 37C45 Secondary 11Zxx}
\keywords{Piecewise contraction, symbolic dynamics, $b$-adic expansion, dimension theory}
\begin{document}

\maketitle

\centerline{\scshape Benito Pires}

{\footnotesize
	\centerline{Departamento de Computa\c c\~ao e Matem\'atica, Faculdade de Filosofia, Ci\^encias e Letras}
	\centerline {Universidade de S\~ao Paulo, 14040-901, Ribeir\~ao Preto - SP, Brazil}
	\centerline{benito@usp.br} }

\marginsize{2.5cm}{2.5cm}{1cm}{2cm}

\begin{abstract} Let $I=[0,1)$, $b\in \{2,3,\ldots\}$ and $f:I\to I$ be an injective piecewise $\frac{1}{b}$-affine map, that is, assume that there exists a partition of $I$ into intervals $I_1,\ldots,I_n$ such that $\vert f(x)-f(y)\vert\le\frac1b \vert x-y\vert$ for all $x,y\in I_i$ and $1\le i\le n$. In this note, we study the $\delta$-parameter family of maps $f_{\delta}=R_{\delta}\circ f$, where $R_\delta:x\mapsto \{x+\delta\}$. More precisely, we show that the set $\mathcal{N}$ of parameters $\delta$ for which $f_{\delta}$ has only natural codings with maximal complexity is a non-empty set with Hausdorff \mbox{dimension $0$}. We also show that for all $\delta\in\mathcal{N}$, the map $f_{\delta}$ is topologically semiconjugate to a minimal $n$-interval exchange transformation satisfying Keane's i.d.o.c. condition. The main result turns out to be a concrete application of the result by Mauduit and Moreira that the set of numbers having $b$-adic expansion with entropy $0$ has Hausdorff dimension $0$.   
 \end{abstract}

\maketitle

\section{Introduction}

Let $I=[0,1)$ denote the unit interval. A map $f:I\to I$ is a {\it piecewise contraction of $n$ intervals} ($n$-PC) if there are $0<\lambda<1$, $n\ge 2$ and 
$0=x_0<x_1<\ldots<x_{n-1}<x_n=1$ such that $\left\vert f(x)-f(y)\vert\le \lambda\vert x-y\right\vert$ for all $x,y \in [x_{i-1},x_i)$ and $1\le i\le n$. If there exist $d_1,\ldots,d_n\in\mathbb{R}$ such that $f(x)=\lambda x +d_i$ for all $x\in \left[x_{i-1},x_i\right)$, then we call $f$ a \textit{piecewise $\lambda$-affine map}. We assume that $\mathcal{D}(f)=\{x_1,\ldots,x_{n-1}\}$ is the discontinuity set of $f$. The set $G=I{\setminus}f(I)$ is called the \textit{gap set}. An infinite word $\theta=\theta_0 \theta_1\ldots$ over the alphabet $\{1,\ldots,n\}$ is the {\it natural $f$-coding} of $x\in I$ if $\theta_i=\epsilon\big(f^i(x)\big)$ for all $i\ge 0$, where $\epsilon:I\to \{1,\ldots,n\}$ is defined by $\epsilon(y)=i$ if $y\in [x_{i-1},x_i)$.
We denote by $L_k(\theta)=\{\theta_{\ell} \theta_{\ell+1}\ldots \theta_{\ell+k-1}: \ell\ge 0\}$ the set of subwords of $\theta$ of length $k$. The \textit{complexity function} of $\theta$ is the map 
$p_\theta:\mathbb{N}\to\mathbb{N}$ defined by $p_\theta(k)=\#L_k(\theta)$, where $\#$ stands for cardinality. It is known (see \cite{CGM} or \cite[Corollary $2.4.(i)$]{BP}) that if $\theta$ is any natural coding of 
 an injective $n$-PC, then there exists $k_0\in\mathbb{N}$ such that $p_\theta(k)\le (n-1)k +1$ for all $k\ge k_0$.
 The aim of this note is to study the set $\mathscr{C}$  of $n$-PCs whose natural codings $\theta$ have the maximal complexity function $p_\theta(k)=(n-1)k+1$ for all $k\ge 1$. By \cite[Corollary $2.4.(ii)$]{BP} and by Keane's Irrationality Criterium, the set $\mathscr{C}$ is at least as big as the set of $n$-IETs with irrational length vectors. 
 
 We will need the following definition.
 
 \begin{definition} An $n$-PC $f:I\to I$ \textit{has only natural codings with maximal complexity} if
 every natural coding $\theta$ of $f$ has the complexity function $p_\theta(k)=(n-1)k+1$ for all $k\ge 1$.
 \end{definition}
Notice that when $n=2$, a $2$-PC $f:I\to I$ has only natural codings with maximal complexity if and only if every natural coding of $f$ is a Sturmian word. 

 The relation between piecewise contractions and $b$-adic expansions of real numbers is revealed when we consider piecewise $\lambda$-affine maps with $\lambda=\frac{1}{b}$. This is the case of the one-parameter family of $n$-PCs defined next.  As usual, $R_{\delta}:I\to I$ denotes the rotation by $\delta$, that is, $R_{\delta}(x)=\{x+\delta\}$ for all $x\in I$.
 
  \begin{definition}[The family of maps $f_{\delta}$]\label{def1} Given integers  $b\ge n\ge 2$ and real numbers $0=x_0<x_1<\ldots<x_{n-1}<x_n=1$,  
  let $f:I\to I$ be an injective piecewise $\dfrac1b$-affine map defined by $$f(x)=\dfrac1b x+ d_i\quad \textrm{for all}\quad x\in \left[x_{i-1},x_i\right)\quad\mathrm{and}\quad 1\le i\le n,$$
  where $d_1,\ldots,d_n\in\mathbb{R}$ are such that $d_{i}-d_{i+1}=1$ for some $1\le i\le n-1$. We assume that the connected components $G_1$,\ldots, $G_{m}$ of $G:=I{\setminus}f(I)$ have the same length $L$. For each $\delta\in\mathbb{R}$, let $f_{\delta}:I\to I$ denote the map
\begin{equation}\label{eq1}
f_{\delta}(x)=\left(R_{\delta}\circ f\right)(x)\quad\textrm{for all}\quad x\in I.\end{equation}
 \end{definition}
 \begin{remark}\label{rem} In Definition \ref{def1}, the condition $d_{i}-d_{i+1}=1$ means that the interval map $f:I\to I$ is induced by a piecewise affine self-map of the unit circle $\widehat{f}:\mathbb{T}^1\to\mathbb{T}^1$. In fact, such condition is equivalent to the following conditions:
 $\lim_{\epsilon\to 0^+}f(x_i-\epsilon)=1$ and $f(x_i)=0$.
 \end{remark}
 \begin{example} The maps $f:I\to I$ defined below satisfy the hypotheses of Definition \ref{def1}.
 \begin{equation*}
(a)\,\,f(x)=\begin{cases} \dfrac12 x+\dfrac23 & \mathrm{if}\quad x\in \left[0,\dfrac23\right) \\[0.2in]
\dfrac12 x-\dfrac13 & \mathrm{if}\quad x\in \left[\dfrac23,1\right) 
\end{cases},\quad
(b)\,\,f(x)=\begin{cases} \dfrac14 x+\dfrac{17}{40} & \mathrm{if}\quad x\in \left[0,\dfrac25\right) \\[0.2in]
\dfrac14 x+\dfrac45 & \mathrm{if}\quad x\in \left[\dfrac25,\dfrac45\right)\\[0.2in]
\dfrac14 x-\dfrac{1}{5}& \mathrm{if}\quad x\in \left[\dfrac45,1\right)
\end{cases}.
\end{equation*}
\end{example}
The corresponding plots are given in Figure \ref{fig1}.
  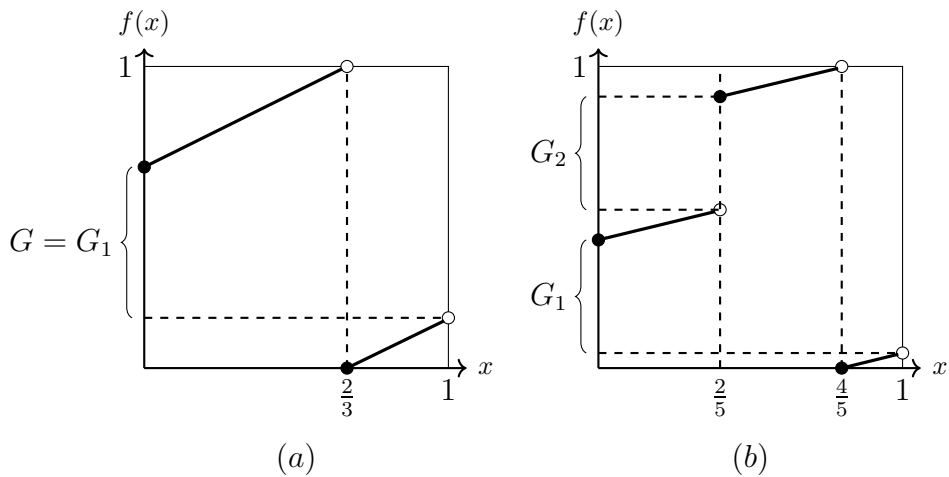
\begin{figure}[!htb]
     \begin {minipage}{0.4\textwidth}
     \centering  
\begin{tikzpicture}[scale=0.8]

% Malha de coordenadas

%\draw [step=1,thin,black!60] (0,0) grid (3,3);

% Eixos do plano cartesiano
\draw [  thick, ->] (0,0) -- (5.3,0) node [right] {\footnotesize $x$};
\draw [  thick, ->] (0,0) -- (0,5.3) node [above] {\footnotesize $f(x)$};
	
% Escala dos eixos 
			
%  Região

\draw (0,0)--(5,0)--(5,5)--(0,5);
			
\draw[fill=black] (0,10/3) circle (0.1);
\draw[fill=white] (10/3,5) circle (0.1);
\draw[very thick] (0,10/3)--(10/3-0.08,5-0.04) ;

\draw[fill=black] (10/3,0) circle (0.1);
\draw[fill=white] (5,5/6) circle (0.1);
\draw[very thick] (10/3,0)--(5-0.07,5/6-0.05) ;

\draw [  thick, dashed] (10/3,0)--(10/3,5);

\draw (10/3,0) node[below] {$\frac23$};
\draw (5,0) node[below] {$1$};
\draw (0,5) node[left] {$1$};

%\draw [  thick, dashed] (0,10/3)--(5,10/3);
\draw [  thick, dashed] (0,5/6)--(5,5/6);

 \draw [decorate,decoration={brace,amplitude=4pt},xshift=-0.2cm,yshift=0pt]
      (0,5/6) -- (0,10/3) node [midway,left,xshift=-.1cm] {$G=G_1$};

\draw[very thick,dashed] node at (2.5,-1.5) {$(a)$};
 
\end{tikzpicture}
   \end{minipage}
    \begin{minipage}{0.4\textwidth}
     \centering
 
\begin{tikzpicture}[scale=0.8]

% Malha de coordenadas

%\draw [step=1,thin,black!60] (0,0) grid (3,3);

% Eixos do plano cartesiano
\draw [  thick, ->] (0,0) -- (5.3,0) node [right] {\footnotesize $x$};
\draw [  thick, ->] (0,0) -- (0,5.3) node [above] {\footnotesize $f(x)$};
	
% Escala dos eixos 
			
%  Região

\draw (0,0)--(5,0)--(5,5)--(0,5);
			
\draw[fill=black] (0,17/8) circle (0.1);
\draw[fill=white] (2,1/2+17/8) circle (0.1);
\draw[very thick] (0,17/8)--(2-0.08,1/2+17/8-0.03) ;

\draw[fill=black] (2,4.5) circle (0.1);
\draw[fill=white] (4,5) circle (0.1);
\draw[very thick] (2,4.5)--(4-0.06,5-0.03) ;

\draw[fill=black] (4,0) circle (0.1);
\draw[fill=white] (5,5/20) circle (0.1);
\draw[very thick] (4,0)--(5-0.07,5/20-0.02) ;

\draw (5,0) node[below] {$1$};
\draw (0,5) node[left] {$1$};
\draw (4,0) node[below] {$\frac45$};

\draw [  thick, dashed] (2,0)--(2,5);
\draw [  thick, dashed] (4,0)--(4,5);
\draw [  thick, dashed] (0,1/2+17/8)--(2,1/2+17/8);
\draw [  thick, dashed] (0,4.5)--(2,4.5);
\draw [  thick, dashed] (0,5/20)--(5,5/20);

 \draw [decorate,decoration={brace,amplitude=4pt},xshift=-0.2cm,yshift=0pt]
      (0,5/20) -- (0,17/8) node [midway,left,xshift=-.1cm] {$G_1$};
      
       \draw [decorate,decoration={brace,amplitude=4pt},xshift=-0.2cm,yshift=0pt]
      (0,1/2+17/8) -- (0,4.5) node [midway,left,xshift=-.1cm] {$G_2$};

\draw (2,0) node[below ] {$\frac25$};
\draw[very thick,dashed] node at (2.5,-1.5) {$(b)$};
 
\end{tikzpicture}
   \end{minipage}\hfill
   \caption{Examples of functions $f$ satisfying Definition \ref{def1}}\label{fig1}
\end{figure}

 Much study was devoted to the family $f_{\delta}$ when $n=2$ and $b\in (1,\infty)$, buy using an approach based on Rotation Number Theory. In particular, by Bugeaud \cite{B1,B2} and by Bugeaud and Conze \cite{BC} we know that
 the set $\mathcal{N}$ of parameters $\delta$ for which $f_{\delta}$ has only natural codings of maximal complexity has Lebesgue measure $0$. More recently, Laurent and Nogueira \cite{LNo} proved that $\mathcal{N}$ has Hausdorff dimension $0$. Janson and Orbeg \cite{JO} improved that result even further: they
supplied gauge functions that provide finer information, including both an upper and a lower bound on the ``size" of $\mathcal{N}$. Concerning $n\ge 2$ and $b\in (1,\infty)$, the author, Nogueira and Rosales \cite{NPR1} showed that the set $\mathcal{N} $ has Lebesgue measure $0$. In this note, we show that for all integers $b\ge n\ge 2$, the set $\mathcal{N}$ has Hausdorff dimension $0$.\\ 

\noindent\noindent\textbf{Acknowledgments.} The author is very grateful to the help and support from the following people: Christian Mauduit for explaining part of the results in \cite{MM}; Udayan Darji for  fruitful discussions; and the organizers of the IV Workshop on Dynamics, Numeration and Tilings at Florian\'opolis, where the author got some of the insights needed to conclude this note. The author was partially supported by grant {\#}2018/06916-0, S\~ao Paulo Research Foundation (FAPESP) and by the National Council for Scientific and Technological Development (CNPq).

\section{Statement of the results}

Our first result concerns the family of maps $f_{\delta}$ introduced in Definition \ref{def1}. We recall that the set $G=I{\setminus}f(I)$ is called the gap set of the map $f$. It follows from Remark \ref{rem} that
$0<\inf G<\sup G<1$ and that $f_{\delta}$ has $n-1$ discontinuities for all $\delta\in  (-\inf G,1-\sup G)$. 

Now we state our first result.

\begin{theorem}\label{thm1} The set $\mathcal{N}\subset (-\inf G,1-\sup G)$ of parameters $\delta$ for which $f_{\delta}=R_{\delta}\circ f$ has only natural codings with maximal complexity has Hausdorff dimension $0$. \end{theorem}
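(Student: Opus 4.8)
The plan is to place $\mathcal N$ inside the exceptional set of Mauduit and Moreira by attaching to each parameter a real number whose base-$b$ expansion is governed by the symbolic dynamics of $f_\delta$. The first step is to record the elementary but crucial consequence of the hypothesis: if $\delta\in\mathcal N$, then every natural coding $\theta$ of $f_\delta$ satisfies $p_\theta(k)=(n-1)k+1$, so its complexity is linear and the associated subshift has topological entropy $\lim_k \frac1k\log p_\theta(k)=0$. Moreover, for $\delta\in\mathcal N$ the map $f_\delta$ is semiconjugate to a minimal i.d.o.c. IET, so the orbit-closure subshift $X_\delta\subseteq\{1,\dots,n\}^{\mathbb Z}$ is minimal and every point of $X_\delta$ has the common language of complexity $(n-1)k+1$; in particular every one-sided sequence occurring in $X_\delta$ has entropy $0$. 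Thus the entire symbolic picture attached to $f_\delta$ sits at entropy zero, and the task is to transport this to a genuine base-$b$ statement about a number depending nicely on $\delta$.

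Next I would build the dictionary between the $f_\delta$-coding and $b$-adic digits, exploiting that $f$ is piecewise $\tfrac1b$-affine. On $[x_{i-1},x_i)$ one has $f_\delta(x)=\{\tfrac1b x+d_i+\delta\}$, so $f_\delta$ contracts by $\tfrac1b$ and its inverse branches expand by exactly $b$. Concretely, if $x_0$ lies on the attractor and $x_{-1}=f_\delta^{-1}(x_0)$ sits in piece $\theta_{-1}$, then $bx_0\equiv x_{-1}+bd_{\theta_{-1}}+b\delta\pmod 1$, which exhibits the $b$-adic shift $T_b:x\mapsto\{bx\}$ as a rotation of $f_\delta^{-1}$. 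Iterating, the forward $T_b$-orbit of $x_0$, whose itinerary with respect to the partition into intervals $[i/b,(i+1)/b)$ is precisely the base-$b$ expansion of $x_0$, is a rotation-twisted copy of the backward $f_\delta$-orbit of $x_0$. Because the past itinerary $\theta_{-1}\theta_{-2}\cdots\theta_{-N}$ pins $x_0$ down to an interval of diameter $\le b^{-N}$, a window of $N+O(1)$ symbols of the coding determines the first $N$ base-$b$ digits. Consequently the base-$b$ digit sequence of $x_0$ is a bounded-memory recoding of the (reversed) past itinerary, and the number of distinct length-$k$ base-$b$ words it produces is at most $(n-1)(k+O(1))+1$. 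Since such a past itinerary is a sequence occurring in the minimal subshift $X_\delta$, it has linear complexity, and hence the base-$b$ expansion of $x_0$ has entropy $0$.

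I would then apply this to a marked point depending affinely on $\delta$. The endpoints of the gaps of $f_\delta$ are the translates $p(\delta)=\{g_\ell+\delta\}$ of the fixed endpoints $g_\ell$ of $G$; since $\delta$ ranges over the bounded interval $(-\inf G,1-\sup G)$ there is no wraparound, so $\delta\mapsto p(\delta)$ is an isometry, in particular bi-Lipschitz and injective. As a gap endpoint, $p(\delta)=f_\delta(x_j^{\pm})$ is the image of a discontinuity and therefore has a backward orbit whose itinerary lies in $X_\delta$; by the previous paragraph its base-$b$ expansion has entropy $0$, that is, $p(\delta)\in Z_b:=\{x\in[0,1):\text{the base-}b\text{ expansion of }x\text{ has entropy }0\}$. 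Hence $p(\mathcal N)\subseteq Z_b$, and the theorem of Mauduit and Moreira gives $\dim_H Z_b=0$. Since bi-Lipschitz maps preserve Hausdorff dimension, $\dim_H\mathcal N=\dim_H p(\mathcal N)\le\dim_H Z_b=0$, which is the assertion.

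The main obstacle is making the dictionary of the second paragraph rigorous. The delicate points are the control of the $\bmod\,1$ carries, so that the passage from the coding to the $b$-adic digits is genuinely a bounded-memory recoding and keeps the complexity linear rather than merely subexponential, and the verification that the marked gap endpoints, which are boundary points and may carry two itineraries, nonetheless possess a well-defined backward itinerary inside the minimal subshift $X_\delta$; both should follow from injectivity of $f_\delta$ together with the i.d.o.c. structure, but they require care. A secondary point is to confirm that the entropy-zero set produced by Mauduit and Moreira is exactly $Z_b$ above, so that the inclusion $p(\mathcal N)\subseteq Z_b$ is literally the hypothesis of their theorem.
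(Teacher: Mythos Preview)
Your plan diverges from the paper's argument and contains a genuine gap at precisely the point you flag as the ``main obstacle.''

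The paper does \emph{not} try to read off the base-$b$ digits of any orbit point. Instead it uses the forward gap tiling: by Lemma~3.2 the intervals $f_\delta^i(G_\delta^{(j)})$ are pairwise disjoint and fill $I$ up to measure zero, each with length $b^{-i}|G^{(j)}|$. Writing $J=[0,d_1+\delta)$ and $w_i^{(j)}=\rchi_J\big(f_\delta^i(q_j)\big)$ for $q_j$ the midpoint of $G_\delta^{(j)}$, one gets the exact identity
\[
\delta=-d_1+\sum_{j=1}^m\sum_{i\ge 0} w_i^{(j)} b^{-i}\,|G^{(j)}|.
\]
Because $J=[0,f_\delta(0))$, membership in $J$ is a one-step recoding of the natural $f_\delta$-coding, so each $w^{(j)}$ has complexity $\le (n-1)k+2$. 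The equal-gap hypothesis $|G^{(j)}|=L$ (which you never use) then lets one factor: $\delta=-d_1+L\sum_i w_i b^{-i}$ with $w=\sum_j w^{(j)}$ a word over $\{0,\dots,b-1\}$ of entropy $0$, and Mauduit--Moreira together with affine invariance of Hausdorff dimension finish.

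Your route instead tries to show that the base-$b$ expansion of a gap endpoint $p(\delta)$ is a bounded-memory recoding of the \emph{backward} $f_\delta$-itinerary. But this is not true. From $x_{-k}=f_\delta(x_{-k-1})$ one gets, with $c_i:=b(d_i+\delta)$,
\[
T_b^{\,k}\big(p(\delta)\big)=\Big\{\,x_{-k}+\sum_{j=1}^{k} b^{\,k-j}c_{\theta_{-j}}\Big\},
\]
so the rotation offset at time $k$ depends on the \emph{entire} history $\theta_{-1},\dots,\theta_{-k}$ through a polynomial in $b$ that does not collapse modulo $1$ for generic $d_i,\delta$. Hence the block $a_{k+1}\cdots a_{k+N}$ of base-$b$ digits can depend on all of $\theta_{-1},\dots,\theta_{-k-N}$, not on a window of length $N+O(1)$, and the linear complexity of the itinerary does not transfer to $p_a(k)$. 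Your argument that ``$N+O(1)$ past symbols determine the first $N$ digits'' only pins down the \emph{initial} $N$ digits of $p(\delta)$; it says nothing about blocks at large positions, which is what the complexity function counts. So the inclusion $p(\mathcal N)\subseteq Z_b$ is not established. The fix is to abandon the backward dictionary and use the forward gap decomposition as above; this is also where the hypothesis $|G^{(j)}|=L$ becomes essential.
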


The approach we use to prove Theorem \ref{thm1} is based on Number Theory. The ideia is to characterize the numbers in the set $\mathcal{N}$ in terms of their $b$-adic expansions. Then we use results obtained by Mauduit and Moreira \cite{MM} that give the Hausdorff dimension of sets of numbers with $b$-adic expansions of entropy zero.

We will need some more definitions in order to state our second result. A bijection $T:I\to I$ is an \textit{$n$-Interval Exchange Transformation} ($n$-IET) if there exist numbers
$0=y_0<y_1<\ldots<y_{n-1}<y_n=1$ and $c_1,\ldots,c_{n}\in\mathbb{R}$ such that $T(y)=y+c_i$ for all $y\in \left[y_{i-1},y_i\right)$ and $1\le i\le n$, where $\mathcal{D}(T)=\{y_1,\ldots,y_{n-1}\}$ is the discontinuity set of $T$.
We say that $T$ satisfies the \textit{i.d.o.c.} (infinite distinct orbit condition) if $O_T(y_i)=\{y_i,T(y_i),\ldots\}$, $1\le i\le n-1$, are infinite pairwise disjoint sets. Keane \cite{MK} proved that if $T$ satisfies the i.d.o.c., then all orbits of $T$ are dense. %We denote by $\overline{I}=[0,1]$ the closure of $I$ and by $\overset{\circ}{G}$ the interior of $G$. 
%Given an $n$-PC $g:I\to I$,
%the \textit{$\omega$-limit set} of $x\in I$  is the set $\omega(x)=\bigcap_{n\ge 0}\overline{\bigcup_{k\ge n}{ \{f^k(x)\}}}$.

Our second result clarifies what does it mean for an $n$-PC to have natural codings with maximal complexity.

 \begin{theorem}[Structure Theorem]\label{thm2} Let $n\ge 2$ and $g:I\to I$ be an injective $n$-PC with discontinuity set $\mathcal{D}(g)=\{x_1,\ldots,x_{n-1}\}$. The following statements are equivalent:
 \begin{itemize}
 \item [$(i)$] $g$ has only natural codings with maximal complexity;
 %\item [$(ii)$] $\mathcal{D}(g)\cap \bigcup_{k\ge 0} g^k(G)=\emptyset$;
 %\item [$(ii)$] The $\omega$-limit set of any $x\in I$ is equal to the Cantor set $C=\overline{I}\setminus\bigcup_{k=0}^\infty g^k(\overset{\circ}{G})$;
 \item [$(ii)$] $g$ is topologically semiconjugate to an $n$-IET satisfying the i.d.o.c.
 \end{itemize}
 \end{theorem}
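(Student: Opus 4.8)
The plan is to use the symbolic dynamics generated by the natural codings as the bridge between $g$ and an IET, proving the two implications separately; throughout, for an $n$-IET $T$ I write $\epsilon_T$ for the index function analogous to $\epsilon$, namely $\epsilon_T(y)=i$ on $[y_{i-1},y_i)$, and call the resulting itinerary the natural $T$-coding. For $(ii)\Rightarrow(i)$, suppose $h:I\to I$ is a nondecreasing continuous surjection with $h\circ g=T\circ h$ for an $n$-IET $T$ satisfying the i.d.o.c., and with $h$ respecting the labelled partitions, so that $h$ carries the $i$-th continuity interval of $g$ into the closure of $[y_{i-1},y_i)$. First I would record the classical fact that, since $T$ satisfies the i.d.o.c., Keane's theorem \cite{MK} makes every $T$-orbit dense, the subshift $X_T=\overline{\{(\epsilon_T(T^j y))_{j\ge0}:y\in I\}}$ is minimal, and its complexity equals $(n-1)k+1$ for all $k\ge1$. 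Partition-compatibility yields $\epsilon\big(g^j(x)\big)=\epsilon_T\big(T^j(h(x))\big)$ for all $x$ and $j$, so the natural $g$-coding of $x$ coincides with the natural $T$-coding of $h(x)$. Minimality of $X_T$ forces each individual $T$-coding to contain the whole language of $X_T$ as its set of subwords, hence to have complexity exactly $(n-1)k+1$; therefore so does every natural coding of $g$, which is $(i)$.

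For $(i)\Rightarrow(ii)$ the idea is to manufacture the IET by collapsing the gaps of $g$. Because $g$ is an injective $n$-PC, the images $J_i=g\big([x_{i-1},x_i)\big)$ are pairwise disjoint intervals, ordered inside $I$ by some permutation that will become the combinatorial datum of $T$. I would first argue that maximal complexity forces the global attractor $\Lambda=\bigcap_{k\ge0}\overline{g^k(I)}$ to be an infinite set on which $g$ acts minimally: the value $p_\theta(1)=n$ says every orbit visits all $n$ pieces, while the constant increment $p_\theta(k+1)-p_\theta(k)=n-1$, read against the eventual bound of \cite[Corollary $2.4$]{BP}, rules out the asymptotically periodic regime that a finite attractor would produce and pins the coding subshift $X=\overline{\theta(I)}$ down as a minimal subshift of complexity $(n-1)k+1$.

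I would then realise $X$ as the coding of an IET. Since $g$ is order-preserving on each piece and the intervals $J_1,\ldots,J_n$ are arranged in $I$ according to the permutation above, the coding map $\theta$ is order-preserving from $(\Lambda,<)$ into $(X,\prec)$ for the induced twisted-lexicographic order $\prec$. Fixing a $g$-invariant Borel probability measure $\mu$ on $\Lambda$ (which exists by compactness) and setting $h(x)=\mu\big(\{z\in\Lambda:\theta(z)\preceq\theta(x)\}\big)$ collapses each gap together with all of its $g$-preimages to a single point and transports the order to Lebesgue position on $[0,1)$; pushing the dynamics through this address map converts $g$ into an affine bijection $T$ that exchanges $n$ intervals with the prescribed permutation, and $h\circ g=T\circ h$ holds by construction. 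Finally the i.d.o.c.\ for $T$ would follow from minimality of $X$ together with the exact value $(n-1)k+1$: were some discontinuity of $T$ to have a finite orbit, or two discontinuity orbits to meet, the count of right-special factors would drop below $n-1$ at some length and contradict the complexity, so Keane's condition holds.

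The step I expect to be the main obstacle is the $(i)\Rightarrow(ii)$ construction: showing rigorously that maximal complexity forces $\Lambda$ to be an infinite minimal set, and that the address map $h$ is genuinely well-defined, monotone and \emph{continuous}, so that the quotient is an everywhere-defined $n$-IET rather than a merely measurable exchange. Controlling the images $h(x_i)$ of the discontinuities, and verifying that the resulting length vector and permutation produce an IET whose $n-1$ discontinuities have the infinite, pairwise disjoint orbits demanded by the i.d.o.c., is the delicate part; the complexity estimates of \cite{BP} and Keane's density theorem \cite{MK} are the tools I would rely on to settle them.
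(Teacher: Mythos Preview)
Your outline matches the paper's proof in strategy: for $(ii)\Rightarrow(i)$ you transport codings through the semiconjugacy and invoke the complexity of i.d.o.c.\ IETs, and for $(i)\Rightarrow(ii)$ you produce an invariant measure and use its distribution function to collapse onto an IET. The paper does exactly this, except that it outsources the hard construction to \cite[Lemma 3.1, Theorem 3.5]{BP} and \cite[Corollary 2.2]{BP2}, and it builds $h$ as the cumulative distribution $h(x)=\mu([0,x))$ rather than via a twisted lexicographic order on codings.

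There are, however, two genuine gaps you should close. First, in $(ii)\Rightarrow(i)$ you \emph{assume} that $h$ carries $[x_{i-1},x_i)$ into $[y_{i-1},y_i)$; but the hypothesis is only that $h$ is a topological semiconjugacy, so this compatibility must be \emph{deduced}. The paper does so by observing $\mathcal{D}(T)\subseteq h(\mathcal{D}(g))$ and then counting: both sets have $n-1$ points and $h$ is nondecreasing, forcing $h(x_i)=y_i$. Second, and more seriously, in $(i)\Rightarrow(ii)$ you write that an invariant probability on $\Lambda$ ``exists by compactness.'' The Krylov--Bogolyubov argument requires continuity, and $g|_\Lambda$ is \emph{not} continuous in general (the discontinuities $x_i$ lie in $\Lambda$ precisely in the maximal-complexity regime). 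Existence of a non-atomic invariant measure here is a nontrivial fact---the paper invokes \cite[Theorem 3.5]{BP}, which uses the absence of ultimately periodic codings (from Morse--Hedlund) in an essential way. Non-atomicity is what makes your $h$ continuous; without it the quotient need not be an everywhere-defined IET. You correctly identified this step as the obstacle, but ``by compactness'' does not suffice to clear it.
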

 
 It is worth mentioning that Ferenczi and Zamboni \cite{FZ} studied \mbox{$n$-IETs} whose natural codings $\theta$ have complexity function $p_{\theta}(k)=(n-1)k+1$ for every $k\ge 1$, which is the case of $n$-IETs
   satisfying the i.d.o.c.

\section{Preparatory Lemmas and Proof of Theorem \ref{thm1}}

In what follows, let $n\ge 2$ and $g:I\to I$ be an injective $n$-PC with discontinuity set $\mathcal{D}(g)=\{x_1,\ldots,x_{n-1}\}$ and gap set $G=I{\setminus} g(I)$. We say that $g$ has a \textit{connection} if there exist $k\ge 1$ and
$1\le i,j\le n$ such that $g^k(x_i)=x_j$. 

\begin{lemma}\label{lem2.1} The following statements are equivalent:
\begin{itemize}
 \item [$(i)$] $g$ has only natural codings with maximal complexity;
 \item [$(ii)$] $g$ has no connection and $\mathcal{D}(g)\cap \bigcup_{k= 0}^\infty g^k(G)=\emptyset$.
 \end{itemize}
\end{lemma}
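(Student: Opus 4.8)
The plan is to recast the complexity of a coding in terms of the geometry of the discontinuities and their backward orbits, reducing the equivalence to a clean counting statement. For $k\ge 1$ and $y\in I$ write $I_k(y)=\epsilon(y)\epsilon(g(y))\cdots\epsilon(g^{k-1}(y))$ for the length-$k$ itinerary, so that $L_k(\theta)=\{I_k(g^\ell(x)):\ell\ge 0\}$ when $\theta$ is the natural coding of $x$. The map $y\mapsto I_k(y)$ is constant on each connected component of $I\setminus D_k$, where
\begin{equation*}
D_k=\bigcup_{j=0}^{k-1}\bigl(g^{-j}(\mathcal{D}(g))\cap I\bigr)
\end{equation*}
is the set of points at which the length-$k$ itinerary jumps. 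Let $P(k)$ denote the number of admissible words of length $k$ (equivalently, of nonempty cylinders $[w]$). Then for every coding $\theta$ one has $p_\theta(k)\le P(k)\le \#D_k+1$, and since $g$ is injective each $x_i$ has at most one $g^{-j}$-preimage, whence $\#D_k\le (n-1)k$ and we recover $p_\theta(k)\le (n-1)k+1$.

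The arithmetic heart of the proof is the identity that $(ii)$ holds if and only if $\#D_k=(n-1)k$ for all $k$. First I would record, by an induction on $j$ that uses injectivity of $g$, that $I\setminus g^{j}(I)=\bigcup_{\ell=0}^{j-1}g^{\ell}(G)$; hence $g^{-j}(x_i)$ is nonempty for every $j$ if and only if $x_i\notin\bigcup_{\ell\ge 0}g^{\ell}(G)$, which is exactly the condition $\mathcal{D}(g)\cap\bigcup_{\ell\ge0}g^{\ell}(G)=\emptyset$. Next, two of the points $g^{-a}(x_i)$ and $g^{-b}(x_{i'})$ coincide (with $(a,i)\neq(b,i')$) precisely when $g$ has a connection: applying $g^{\min(a,b)}$ to such a coincidence yields $g^{|a-b|}(x_i)=x_{i'}$ with $|a-b|\ge 1$, and conversely a connection $g^{m}(x_i)=x_{i'}$ exhibits the point $x_i$ simultaneously as $g^{-0}(x_i)$ and as $g^{-m}(x_{i'})$. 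Thus the $(n-1)k$ points listed in $D_k$ are all defined and pairwise distinct for every $k$ if and only if both clauses of $(ii)$ hold.

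With this in hand, the implication $(i)\Rightarrow(ii)$ is immediate in contrapositive form. If $(ii)$ fails then, by the previous paragraph, some $g^{-j}(x_i)$ is undefined or some pair of the listed points coincides, so $\#D_k\le(n-1)k-1$ for some $k$; consequently $P(k)\le\#D_k+1\le(n-1)k<(n-1)k+1$, and therefore \emph{every} coding satisfies $p_\theta(k)\le P(k)<(n-1)k+1$ at that level. In particular $g$ does not have only natural codings with maximal complexity, which is $\neg(i)$.

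The substantial direction is $(ii)\Rightarrow(i)$, and here the difficulty shifts from counting cylinders to showing that each individual coding actually realizes all of them. Assuming $(ii)$, the set $D_k$ consists of exactly $(n-1)k$ distinct interior points, so $I\setminus D_k$ has exactly $(n-1)k+1$ components. I would introduce the attractor $\Lambda=\bigcap_{k\ge 0}\overline{g^{k}(I)}$, which is a Cantor set because $g$ is a contraction, and prove under $(ii)$ that the coding map $z\mapsto\bigl(\epsilon(g^{\ell}z)\bigr)_{\ell\ge 0}$ is injective on $\Lambda$ (this is where the absence of connections is used, a coincidence of codings propagating to a connection) and that $g|_\Lambda$ is minimal (here both clauses of $(ii)$ enter: the gap condition keeps all $n-1$ discontinuities active, and no connection excludes periodic orbits). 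Granting this, for every $x\in I$ the set $\omega(x)$ is a nonempty closed $g$-invariant subset of $\Lambda$, hence equals $\Lambda$, so $L_k(\theta)=\{I_k(z):z\in\Lambda\}$ for the coding $\theta$ of $x$; since $\Lambda$ meets the interior of each of the $(n-1)k+1$ components of $I\setminus D_k$ and the itinerary separates them, this set has exactly $(n-1)k+1$ elements, giving $(i)$. The hard part will be establishing minimality of $g|_\Lambda$ and the separation of $\Lambda$ by itineraries directly from the hypotheses of $(ii)$, using only injectivity, the contraction property, and the structural facts about codings of injective piecewise contractions recalled from \cite{BP,CGM}, while taking care not to appeal to the Structure Theorem, which is proved only afterwards.
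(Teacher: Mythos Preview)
Your counting argument for $(i)\Rightarrow(ii)$ (in contrapositive form) is correct and matches the paper: both compute the number of level-$k$ cylinders as $1+\#D_k$ and trace the equality $\#D_k=(n-1)k$ back to the two clauses of $(ii)$, the paper via the increments $n_\ell$ in its formula $p_\theta(k)=1+\sum_{\ell<k}n_\ell$, and you via the identity $I\setminus g^j(I)=\bigcup_{\ell<j}g^\ell(G)$ together with the connection dichotomy.

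Where the two diverge is $(ii)\Rightarrow(i)$. The paper simply asserts that for \emph{every} natural coding $\theta$, a word occurs in $\theta$ if and only if its cylinder is non-empty, and then reads off $p_\theta(k)=(n-1)k+1$ from the cylinder count. Taken literally that biconditional is false for arbitrary injective $n$-PCs (an injective $2$-PC with two attracting fixed points has codings $1^\infty$ and $2^\infty$, each missing a non-empty $1$-cylinder), so the paper's written argument really only yields the inequality $p_\theta(k)\le 1+\#D_k$; this suffices for $\neg(ii)\Rightarrow\neg(i)$ but leaves $(ii)\Rightarrow(i)$ unjustified. You are right to isolate this as the substantive direction.

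Your proposed route through the attractor $\Lambda$ is sound in outline but, as you concede, incomplete, and two of the supporting claims are not as light as you suggest. First, ``no connection excludes periodic orbits'' is incorrect as stated: the map $g(x)=\tfrac{x}{2}+\tfrac18$ on $[0,\tfrac12)$, $g(x)=\tfrac{x}{2}+\tfrac38$ on $[\tfrac12,1)$ has no connection (the forward orbit of $\tfrac12$ never hits $\tfrac12$) yet has fixed points $\tfrac14$ and $\tfrac34$; it is the \emph{gap} clause of $(ii)$ that fails there, so ruling out periodic orbits genuinely needs both clauses and an argument tying the basin boundary of an attracting cycle to $\mathcal{D}(g)$ and $\bigcup_k g^k(G)$. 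Second, ``$\Lambda$ is a Cantor set because $g$ is a contraction'' and ``$\Lambda$ meets every component of $I\setminus D_k$'' are consequences of the minimality you want, not independent inputs to it. Closing the gap cleanly seems to require exactly the invariant-measure/semiconjugacy machinery of \cite{BP2,BP} that underlies the Structure Theorem; a self-contained combinatorial proof of minimality of $g|_\Lambda$ directly from $(ii)$ would be a separate, nontrivial lemma rather than the routine verification your outline implies.
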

\begin{proof} We will show that $(i)\iff (ii)$.  Let $\mathcal{A}=\{1,\ldots,n\}$ and
$\mathscr{P}=\{I_1,\ldots,I_n\}$ be the partition associated with $g$, where $I_i=\left[x_{i-1},x_i\right)$. 
It is elementary that $g^{-1}(I_i)$, $1\le i\le n$, is either empty or the union of finitely many intervals with non-empty interiors. Moreover, $g^{-k}(\mathscr{P})$ is a partition of $I$ for every $k\ge 0$, implying that each member of the family $$ \mathscr{P}_k=\bigwedge_{\ell=0}^{k-1} g^{-\ell}(\mathscr{P})=\left\{I_{i_0}\cap g^{-1}\big(I_{i_1}\big)\cap \cdots\cap g^{-(k-1)}\big(I_{i_{k-1}}\big): i_0,i_1,\ldots,i_{k-1}\in\mathcal{A}\right\}
 $$
is either empty or the union of pairwise disjoint intervals. Moreover, if $\theta$ is a natural coding of $g$, then the $k$-word 
$i_0 i_1\ldots i_{k-1}$ occurs in $\theta$ if and only if $J=I_{i_0}\cap g^{-1}\big(I_{i_1}\big)\cap \cdots\cap g^{-(k-1)}\big(I_{i_{k-1}}\big)\in \mathscr{P}_k$ is non-empty. Therefore, the number of non-empty members in $\mathscr{P}_k$
equals $p_\theta(k)$ and is bounded by the number of partition points of $\mathscr{P}_k$ as follows
\begin{equation}\label{pk}
 p_\theta(k)=1+\sum_{\ell=0}^{k-1} n_{\ell}, 
  \end{equation}
 where $n_0=n-1$ and
$$
  n_{\ell}=\#\left( g^{-\ell}(\{x_1\})\cup\ldots\cup g^{-\ell}(\{x_{n-1}\}){\bigg\backslash} \bigcup_{p=0}^{\ell-1} g^{-p}(\{x_1\})\cup\ldots\cup g^{-p}(\{x_{n-1}\})\right)
  $$
  gives the number of new division points at the $\ell$-th step towards the construction of $\mathscr{P}_k$. Notice that
  $g^{-p}(\{x_i\})$ is either the empty-set or a singleton. In this way, $(i)$ occurs, i.e., 
   $p_\theta(k)=(n-1)k+1$ for all $k\ge 1$, if and only if
  $(ii)$ occurs.
\end{proof}

\begin{lemma}\label{lem2.2} Assume that $g$ has only natural codings with maximal complexity and
write its gap $G=G^{(1)}\cup \ldots\cup G^{(m)}$ as the union of its connected components. Then, the collection $\left\{g^k(G^{(j)}):1\le j\le m, k\ge 0\right\}$ is formed by pairwise disjoint intervals and 
$$\mathrm{Leb}\left(\bigcup_{j=1}^m\bigcup_{k=0}^\infty g^k(G^{(j)})\right)=1,$$
where Leb means Lebesgue measure.
\end{lemma}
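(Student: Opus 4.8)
The plan is to lean on the characterization in Lemma~\ref{lem2.1}: under the maximal-complexity hypothesis, $g$ has no connection and $\mathcal{D}(g)\cap\bigcup_{k\ge 0}g^k(G)=\emptyset$. The statement has three parts—each $g^k(G^{(j)})$ is an interval, the family is pairwise disjoint, and the union is full-measure—and I would treat them in that order, since the first two are soft consequences of injectivity and of the second condition in Lemma~\ref{lem2.1}, whereas the measure equality is where the contraction is genuinely used.

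For the interval claim I would induct on $k$. As $g^0(G)=G$ and $\mathcal D(g)\cap G=\emptyset$, each component $G^{(j)}$ lies in a single continuity interval $[x_{i-1},x_i)$, so $g|_{G^{(j)}}$ is affine and $g(G^{(j)})$ is an interval. Inductively, if $g^k(G^{(j)})$ is an interval, then $\mathcal D(g)\cap g^k(G)=\emptyset$ forces it to avoid every discontinuity, hence to sit inside one piece of the partition, so $g^{k+1}(G^{(j)})$ is again an interval. This is the only place the hypothesis of Lemma~\ref{lem2.2} is actually needed: without it a gap image could straddle a discontinuity and split into several pieces.

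For disjointness I would combine injectivity of $g$ with the defining property $G\cap g(I)=\emptyset$. Suppose $g^k(G^{(j)})\cap g^{k'}(G^{(j')})\neq\emptyset$ with $k\le k'$, say $g^k(a)=g^{k'}(b)$ for some $a\in G^{(j)}$, $b\in G^{(j')}$. Since $g^{-1}$ is well defined and injective on $g(I)$, applying it exactly $k$ times—legitimate at each step because the relevant point still lies in $g(I)$—yields $a=g^{k'-k}(b)$. If $k'>k$ then $g^{k'-k}(b)\in g(I)$ while $a\in G=I\setminus g(I)$, a contradiction; and if $k'=k$ then $a=b$, which forces $j=j'$ because distinct components are disjoint. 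Hence the only coincidence is $(k,j)=(k',j')$.

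The substantive part, and the step I expect to be the main obstacle, is $\mathrm{Leb}(W)=1$ for $W=\bigcup_{j,k}g^k(G^{(j)})=\bigcup_{k\ge 0}g^k(G)$. I would study the complement $E=I\setminus W$ and show it is exactly invariant, $g(E)=E$. From the partition $I=g(I)\sqcup G$ one gets $E\subseteq g(I)$; a short argument using injectivity then shows $g(E)\subseteq E$ and that the unique $g$-preimage of any point of $E$ lies in $E$, giving $E\subseteq g(E)$, so $g(E)=E$. Finally, since $g$ is $\lambda$-Lipschitz on each piece it contracts Lebesgue measure, $\mathrm{Leb}(g(A))\le\lambda\,\mathrm{Leb}(A)$, whence $\mathrm{Leb}(E)=\mathrm{Leb}(g(E))\le\lambda\,\mathrm{Leb}(E)$ with $\lambda<1$, forcing $\mathrm{Leb}(E)=0$ and therefore $\mathrm{Leb}(W)=1$. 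The delicate point is verifying the two inclusions that yield $g(E)=E$; once that invariance is in hand, the contraction estimate finishes the proof immediately.
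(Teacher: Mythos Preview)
Your argument is correct and tracks the paper's proof closely. The interval and disjointness claims are handled the same way (via Lemma~\ref{lem2.1}(ii) and injectivity together with $g^{-1}(G)=\emptyset$); for the measure step the paper is slightly more direct, observing that $I\setminus W=\bigcap_{k\ge 0}g^k(I)$ and hence $\mathrm{Leb}(I\setminus W)\le \lambda^k$ for every $k$, whereas you instead establish $g(E)=E$ and apply the contraction once---but this is a cosmetic repackaging of the same idea, not a different route.
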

\begin{proof} By Lemma \ref{lem2.1}.(ii), we have that $g^k(G^{(j)})$ is an interval for all $1\le j\le m$ and $k\ge 0$. Since $g^{-1}(G)=\emptyset$ and $g$ is injective, we have that $\left\{g^k(G^{(j)}):1\le j\le m, k\ge 0\right\}$ is formed by pairwise disjoint intervals. Let $0<c<1$ be the contraction rate of $g$ and
 $A=\bigcup_{j=1}^m\bigcup_{k=0}^\infty g^k(G^{(j)})$. We have that $I{\setminus}A=\bigcap_{k\ge 0} g^k(I)$, thus
 $\mathrm{Leb}\,(I{\setminus}A)\le c^k \mathrm{Leb}\,(I)$ for any $k\ge 0$, which yields $\mathrm{Leb}\,(A)=1$.
\end{proof}

%\begin{definition}[Recurrent word] An infinite word $\theta$ is \textit{recurrent} if every finite subword of $\theta$ occurs infinitely many times in $\theta$.
%\end{definition}
%\begin{lemma} Assume that $g$ has only natural codings with maximal complexity. Then all the natural codings of $g$ are recurrent words. 
%\end{lemma}
%\begin{proof} Let $\theta$ be the natural coding of $x\in I$, then $p_\theta(k)=(n-1) k+1$ for all $k\ge 1$. By way of contradiction, assume that $\theta$ is not recurrent, then we may write $\theta$ as the concatenation $\theta=\alpha\beta$, where the finite word $\alpha$ of length $\vert\alpha\vert\ge 1$ does not occur in the infinite word $\beta$. In particular,
%$p_\beta(\vert\alpha\vert)<p_{\theta}(\vert\alpha\vert)$, which leads to $p_{\theta}(\vert\alpha\vert)<(n-1) \vert\alpha\vert+1$. 
%This is a contradiction because $\beta$ is the natural coding of $g^{\vert\alpha\vert}(x)$, hence $p_\beta(\vert\alpha\vert)=(n-1)\vert\alpha\vert+1$.
%\end{proof}

\begin{proof}[Proof of Theorem \ref{thm2}] $(i)\implies (ii)$. Let $\omega(x)=\bigcap_{\ell=0}^\infty \overline{\bigcup_{k=\ell}^\infty \left\{g^k(x)\right\}}$ denote the $\omega$-limit set of $x\in I$. By the Morse-Hedlund Theorem, $g$ has no ultimately periodic natural coding. Then, by \cite[Lemma 3.1]{BP}, we have that $\omega(x)$ is a set with infinite cardinality. By \cite[Theorem 3.5]{BP}, $g$ admits a non-atomic $g$-invariant Borel probability measure. Proceeding as in the proof of \cite[Corollary 2.2]{BP2}, we conclude that $g$ is topologically semiconjugate to a $r$-IET $T:I\to I$ with $r\le n$. The fact that $g$ has only natural codings with maximal complexity implies that $r=n$ and that $T$ satisfies the i.d.o.c. 
  
$(ii)\implies (i)$. 
Denote by $\mathcal{D}(g)=\{x_1,\ldots,x_{n-1}\}$ and $\mathcal{D}(T)=\{y_1,\ldots,y_{n-1}\}$  the discontinuity sets of $g$ and $T$, respectively. Let $h:I\to I$ denote the topological semiconjugacy, thus $\mathcal{D}(T)\subseteq h\big(\mathcal{D}(g)\big) $. Since $\mathcal{D}(g)$ and $\mathcal{D}(T)$ have the same cardinality $n$ and since $h$ is non-decreasing, we have that $y_i=h(x_i)$ for every $1\le i\le n-1$. In particular, $h\big([x_{i-1},x_i)\big)=[y_{i-1},y_i)$ for all
$1\le i\le n$, where $x_0=y_0=0$ and $x_n=y_n=1$. This means that the natural $f$-coding $\theta$ of any $x\in I$ equals the natural $T$-coding $\theta$ of $h(x)$. Moreover, since $T$ is an $n$-IET satisfying the i.d.o.c., we have that the complexity function of $\theta$ by $T$ is $p_\theta(k)= (n-1)k+1$ for every $k\ge 1$ (see \cite[Lemma 6.2]{BP}).
\end{proof}

\section{Proof of Theorem \ref{thm1}}

Throughout this section, let $f:I\to I$, $G=I{\setminus}f(I) $, $\mathcal{N}\subset (-\inf G,1-\sup G)$, $\delta\in\mathcal{N}$,
$f_{\delta}:I\to I$ and $G^{(\delta)}=I{\setminus}f_{\delta}(I)$ be as in the statement of Theorem \ref{thm1}.  The gap sets
$G$ and $G^{(\delta)}$ are the union of $1\le m\le n-1$ connected components:
\begin{equation}\label{split}
G=G^{(1)}\cup\ldots\cup G^{(m)},\quad G_\delta=G^{(1)}_\delta\cup \ldots\cup G^{(m)}_\delta.
\end{equation}
In what follows, we will keep $\delta\in\mathcal{N}$ fixed. We will use $\vert\cdot\vert$ to denote interval length.

\begin{lemma}\label{oi} There exist infinite words $w^{(j)}=w_0^{(j)} w_1^{(j)}\ldots$, 
 $1\le j\le m$, over the alphabet $\{0,1\}$
 such that $p_{w^{(j)}}(k)\le (n-1)k+2$ for every $k\ge 1$ and $1\le j\le m$, and
\begin{equation}\label{eq31}
\delta=-d_1+\sum_{j=1}^m \sum_{i= 0}^\infty w_i^{(j)}b^{-i}\left\vert G^{(j)}\right\vert.
\end{equation}
\end{lemma}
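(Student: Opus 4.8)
The plan is to apply Lemma \ref{lem2.2} to $g=f_\delta$ and to read off the digits $w_i^{(j)}$ from the position of the single point $p:=f_\delta(0)=\{d_1+\delta\}$ relative to the resulting tiling. First I would record the two facts I need: the intervals $f_\delta^i(G_\delta^{(j)})$ ($1\le j\le m$, $i\ge 0$) are pairwise disjoint and cover $I$ up to a set of Lebesgue measure $0$, and since $f_\delta=R_\delta\circ f$ has slope $\tfrac1b$ on each of its continuity intervals and $R_\delta$ preserves length, one has $|f_\delta^i(G_\delta^{(j)})|=b^{-i}|G_\delta^{(j)}|=b^{-i}|G^{(j)}|$ (the last equality because $G_\delta=R_\delta(G)$). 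In the parameter range $(-\inf G,1-\sup G)$ we have $d_1+\delta\in[0,1)$, so $p=d_1+\delta$.

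Next I would check that $p$ lies in no open tile, so that it is a genuine cut point. Indeed $p=f_\delta(0)\in f_\delta(I)=I\setminus G_\delta$, so $p$ is in none of the level-$0$ tiles $G_\delta^{(j)}$; and if $p$ were interior to a tile $f_\delta^i(G_\delta^{(j)})$ with $i\ge 1$, then, $f_\delta$ being continuous and injective on that tile (no discontinuity meets it, by Lemma \ref{lem2.1}.(ii)), injectivity would force $0$ to be interior to $f_\delta^{i-1}(G_\delta^{(j)})\subseteq[0,1)$, which is impossible since $0=\min I$. Setting $w_i^{(j)}=1$ if $f_\delta^i(G_\delta^{(j)})\subseteq[0,p)$ and $w_i^{(j)}=0$ otherwise, each tile then lies entirely on one side of $p$, so summing the lengths of those contained in $[0,p)$ gives $\sum_{j,i}w_i^{(j)}b^{-i}|G^{(j)}|=\bigl|[0,p)\cap\bigcup_{j,i}f_\delta^i(G_\delta^{(j)})\bigr|=|[0,p)|=p=d_1+\delta$, which is exactly \eqref{eq31} after subtracting $d_1$.

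It remains to bound the complexity of each $w^{(j)}$. Since a whole tile lies on one side of $p$, we have $w_i^{(j)}=\mathbf{1}_{[0,p)}(f_\delta^i(y_j))$ for any fixed $y_j\in G_\delta^{(j)}$; that is, $w^{(j)}$ is the itinerary of $y_j$ under $f_\delta$ with respect to $\mathcal{Q}=\{[0,p),[p,1)\}$. Hence $p_{w^{(j)}}(k)$ is at most the number of atoms of $\bigvee_{\ell=0}^{k-1}f_\delta^{-\ell}\mathcal{Q}$, which is $1$ plus the number of interior cut points, and these lie in $D_k\cup P_k$, where $D_k=\bigcup_{\ell=0}^{k-1}f_\delta^{-\ell}(\mathcal{D}(f_\delta))$ and $P_k=\bigcup_{\ell=0}^{k-1}f_\delta^{-\ell}(p)$. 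By the counting of \eqref{pk} applied to the natural coding of $f_\delta$ (whose complexity is $(n-1)k+1$), the interior points of $D_k$ number exactly $(n-1)k$.

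The crux, and the step I expect to be the main obstacle, is to show that $P_k$ contributes only one further interior point. Here I would invoke the circle structure of Remark \ref{rem}: writing $i_0$ for the index with $d_{i_0}-d_{i_0+1}=1$, the map $f_\delta$ is continuous at $x_{i_0}$ but acquires a new ``wrap'' discontinuity $x^\ast\in\mathcal{D}(f_\delta)$ where $f(x^\ast)\equiv -\delta$, and this $x^\ast$ satisfies $f_\delta(x^\ast)=0$. Injectivity then gives $f_\delta^{-1}(p)=\{0\}$ and $f_\delta^{-1}(0)=\{x^\ast\}$, so $f_\delta^{-\ell}(p)=f_\delta^{-(\ell-2)}(x^\ast)$ for $\ell\ge 2$; consequently $P_k\subseteq\{p,0\}\cup\bigcup_{t\ge 0}f_\delta^{-t}(x^\ast)\subseteq\{p,0\}\cup D_k$. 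Since $0=\min I$ is not an interior cut point, the interior cut points lie in $D_k\cup\{p\}$, of cardinality at most $(n-1)k+1$, whence $p_{w^{(j)}}(k)\le 1+(n-1)k+1=(n-1)k+2$. The remaining care is bookkeeping: verifying that the wrap discontinuity is well defined and that $f_\delta(x^\ast)=0$ for both signs of $\delta$, and that $p\in(0,1)$, all of which I would dispatch from $\delta\in(-\inf G,1-\sup G)$ and the no-connection property of Lemma \ref{lem2.1}.(ii).
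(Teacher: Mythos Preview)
Your argument is correct and, for the identity \eqref{eq31}, proceeds exactly as the paper does: invoke Lemma~\ref{lem2.2} for $g=f_\delta$, show that the point $p=f_\delta(0)=d_1+\delta$ is not interior to any tile $f_\delta^{i}\big(G_\delta^{(j)}\big)$, and sum lengths. The paper phrases the ``$p$ is a cut point'' step slightly differently (it observes $f_\delta^{-2}(p)\in\mathcal{D}(f_\delta)$ and invokes Lemma~\ref{lem2.1}(ii) directly), but your argument via ``$0=\min I$ cannot be interior to a tile'' is equivalent.

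For the complexity bound, however, the paper takes a shorter symbolic route that you may find cleaner than your cut-point count. Since $p=f_\delta(0)$ is the left endpoint of an image interval, $f_\delta^{-1}\big([0,p)\big)$ is a \emph{union of atoms} of the natural partition $\mathscr{P}_\delta$. Hence there is a letter-to-letter projection $\eta:\{1,\ldots,n\}\to\{0,1\}$ with $w_{i+1}^{(j)}=\eta\big(\theta_i^{(j)}\big)$, where $\theta^{(j)}$ is the natural $f_\delta$-coding of $q_j$. Writing $w^{(j)}=w_0^{(j)}v$ with $v=\eta(\theta_0^{(j)})\eta(\theta_1^{(j)})\ldots$, one gets $p_v(k)\le p_{\theta^{(j)}}(k)=(n-1)k+1$ and therefore $p_{w^{(j)}}(k)\le 1+p_v(k)=(n-1)k+2$. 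Your approach instead tracks the preimages $P_k$ of $p$ explicitly and shows $P_k\subset\{p,0\}\cup D_k$ via $f_\delta^{-1}(p)=\{0\}$ and $f_\delta^{-1}(0)=\{x^\ast\}\subset\mathcal{D}(f_\delta)$; this is correct and rests on the same structural fact (that $p$ is two $f_\delta$-steps away from a discontinuity), but requires the extra bookkeeping you mention about the wrap discontinuity $x^\ast$. The paper's $\eta$ trick sidesteps that bookkeeping entirely.
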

\begin{proof} Let $J=[0,d_1+\delta)$. Let $q_j$ be the middle-point of $G_\delta^{(j)}$ and
$w^{(j)}=w^{(j)}_0 w^{(j)}_1\ldots$ be the infinity binary word defined by
$w_i^{(j)}=\rchi_J\big(f_{\delta}^{i}(q_j)\big)$. Since $\delta\in(-\inf G,1-\sup G)$,
we have that $f_{\delta}^{-1}(0)\in \mathcal{D}(f_\delta)$. This combined with the fact that $d_1+\delta=f_{\delta}(0)$ 
yield $f_\delta^{-2}(d_1+\delta)\in \mathcal{D}(f_{\delta})$.
Applying Lemma \ref{lem2.1} with $g=f_\delta$ we reach $\mathcal{D}(f_\delta\big)\cap\bigcup_{i= 0}^\infty f_\delta^i\big(G_\delta)=\emptyset$. In this way, using the injectivity of $f_{\delta}$ and (\ref{split}), we reach
$d_1+\delta\not\in \bigcup_{j=1}^m\bigcup_{i=0}^\infty f_\delta^i\big(G_\delta^{(j)}\big)$. This together with Lemma \ref{lem2.2} with $g=f_{\delta}$ yield
\begin{equation}\label{eq32}
d_1+\delta= \vert J\vert=\sum_{j=1}^m \sum_{i=0}^\infty w_i^{(j)} \left\vert f^i\left(G_\delta^{(j)}\right)\right\vert=\sum_{j=1}^m \sum_{i=0}^\infty w_{i}^{(j)} b^{-i}\left\vert G_\delta^{(j)}\right\vert.
\end{equation}
Since $\delta\in (-\inf G,1-\sup G)$, we have that $G+\delta\subset I$, which yields $\{G+\delta\}=G+\delta$. Hence,
$$ G_\delta=I{\setminus} R_{\delta}\big(f(I)\big)=R_\delta\big(I{\setminus}f(I)\big)=R_\delta(G)=\{G+\delta\}=G+\delta.
$$
In this way, by (\ref{split}), $G_\delta^{(j)}=G^{(j)}+\delta$ for all $1\le j\le m$. This shows that
\begin{equation}\label{eq33}
\left\vert G_\delta^{(j)}\right\vert=\left\vert G^{(j)}\right\vert\quad\textrm{for all}\quad 1\le j\le m.
\end{equation}
Replacing (\ref{eq33}) in (\ref{eq32}) yield (\ref{eq31}).

Let us now prove that $p_{w^{(j)}}(k)\le (n-1)k+1$ for all $k\ge 1$. Let $\mathscr{P}_{\delta}=\{I_\delta^{(1)},\ldots,I_\delta^{(n)}\}$ be the partition into intervals associated with the injective $n$-PC $f_{\delta}$. Since $d_1+\delta=f_\delta(0)$, we have that $f_{\delta}^{-1}\big(J\big)$ is the union of finitely many members of   $\mathscr{P}_{\delta}$. Let $\eta:\{1,2,\ldots,n\}\to \{0,1\}$ the map defined by $\eta(i)=1$ if $f_\delta(I_\delta^{(i)})\subset J$ and $\eta(i)=0$ if $f_\delta(I_\delta^{(i)})\subset I{\setminus} J$. As before, let $q_j$ be the middle-point of $G_{\delta}^{(j)}$ and let
$\theta^{(j)}=\theta_0^{(j)}\theta_1^{(j)}\ldots$ denote the natural $f_{\delta}$-coding of $q_j$. Then
$w_{i+1}^{(j)}=\eta\big(\theta_i^{(j)}\big)$ for all $i\ge 0$ and $1\le j\le m$. Hence,
$w^{(j)}=w^{(j)}_0\eta \big(\theta^{(j)}_0\big) \eta \big(\theta^{(j)}_1\big)\ldots$. Writing
${w}^{(j)}=w^{(j)}_0v$, then $v=\eta \big(\theta^{(j)}_0\big) \eta \big(\theta^{(j)}_1\big)\ldots$.  In this way, for each $k\ge 1$, the map $\eta$ induces a surjective map from $L_k\big(\theta^{(j)}\big)$ onto $L_k\big(v\big)$, implying that
$$ p_v(k)=\#L_k(v)\le \#L_k\big(\theta^{(j)}\big)=p_{\theta^{(j)}}(k)=(n-1)k+1.
$$
Therefore,
$$ p_{w^{(j)}}(k)\le 1+p_v(k)=(n-1)k+2.$$
\end{proof}

In what follows, $\mathcal{A}=\{0,1,\ldots,b-1\}$. To each infinite word
$w=w_0 w_1\ldots$ over the alphabet $\mathcal{A}$, we can
associate naturally the real number $x$ in $[0, 1]$ whose representation in base $b$ is given by $w$.
More precisely, $x=\rho(w)$,  where  $\rho: \mathcal{A}^{\mathbb{N}}\to [0,1]$ is the function 
defined by
$$ \rho(w) =\sum_{i=0}^\infty w_i b^{-(i+1)}=b^{-1}\sum_{i=0}^\infty w_i b^{-i}.$$
The \textit{entropy} of an infinite word $w=w_0 w_1\ldots$ over the alphabet $\mathcal{A}$ is the number $E(w) \log b$, where
$$ E(w)=\lim_{k\to\infty}\dfrac{\log_b \big(p_w(k)\big)}{k}.
$$

We will need the following result.
\begin{theorem}[Mauduit-Moreira {\cite[Proposition 1]{MM}}]\label{tmi} For each integer $b\ge 2$, the set $ \left\{\rho(w): w\in\mathcal{A}^{\mathbb{N}}\,\,\textrm{and}\,\,E(w)=0 \right\} 
$
has Hausdorff dimension $0$.
\end{theorem}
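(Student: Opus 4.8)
The plan is to deduce the statement from a single covering estimate, using the countable stability of Hausdorff dimension to replace the entropy-zero condition by a uniform complexity bound at one scale. Write $S=\{\rho(w):w\in\mathcal{A}^{\mathbb{N}},\ E(w)=0\}$. Since $E(w)=0$ forces $\log_b p_w(k)/k\to 0$, for every fixed $\beta>0$ each such $w$ satisfies $p_w(k_0)\le b^{\beta k_0}$ for some (indeed all large) $k_0$. Setting
\[
T_{\beta,k_0}=\bigl\{w\in\mathcal{A}^{\mathbb{N}}:p_w(k_0)\le b^{\beta k_0}\bigr\},
\]
we therefore have $S\subseteq\bigcup_{k_0\ge 1}\rho(T_{\beta,k_0})$. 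By countable stability it suffices to bound $\dim_{\mathrm{H}}\rho(T_{\beta,k_0})$ for each $k_0$; if I can show this is $\le\beta$ for every $k_0$, then $\dim_{\mathrm{H}}S\le\beta$, and letting $\beta\downarrow 0$ completes the proof.

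The core step is a block-counting cover of $\rho(T_{\beta,k_0})$ at scale $b^{-Nk_0}$. I would cover $\rho(T_{\beta,k_0})$ by the base-$b$ cylinder intervals indexed by the length-$Nk_0$ prefixes of words $w\in T_{\beta,k_0}$, each such interval having length $b^{-Nk_0}$. Cutting a prefix into $N$ consecutive blocks $B_0B_1\cdots B_{N-1}$ of length $k_0$, every block $B_i$ is a factor of $w$ of length $k_0$, hence lies in $L_{k_0}(w)$, a set of at most $b^{\beta k_0}$ words. Bounding crudely, the number of such prefixes is at most (number of admissible factor sets) $\times\,(b^{\beta k_0})^{N}$; the number of factor sets is at most $2^{b^{k_0}}=:C(k_0)$, a constant independent of $N$. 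For $s>\beta$ the $s$-dimensional Hausdorff sum of this cover is then at most
\[
C(k_0)\,b^{\beta k_0 N}\,\bigl(b^{-Nk_0}\bigr)^{s}=C(k_0)\,b^{k_0 N(\beta-s)}\xrightarrow[N\to\infty]{}0,
\]
so $\mathcal{H}^{s}\bigl(\rho(T_{\beta,k_0})\bigr)=0$ and $\dim_{\mathrm{H}}\rho(T_{\beta,k_0})\le\beta$, as required.

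The step I expect to be the real obstacle — and the reason a naive argument fails — is precisely that $S$ is dense in $[0,1]$: every finite word extends (for instance by a tail of zeros) to an entropy-zero word, so every cylinder meets $S$ and the box-counting dimension of $S$ is in fact $1$. A single-scale cover can therefore never witness that the Hausdorff dimension is $0$, and one must genuinely exploit variable scales. The device above resolves this by fixing the block length $k_0$ and refining only in the number of blocks $N$: the expensive prefactor $C(k_0)$, which counts the possible factor sets and is doubly exponential in $k_0$, is harmless because it does not depend on $N$ and is washed out in the limit $N\to\infty$ that drives $b^{k_0 N(\beta-s)}$ to $0$. Keeping $k_0$ fixed while $N\to\infty$ — rather than trying to let both grow — is the key structural point, and checking that the prefix count really factors through this $N$-independent bound is the part that needs the most care.
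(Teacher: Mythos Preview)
The paper does not give its own proof of this statement: Theorem~\ref{tmi} is quoted verbatim from Mauduit--Moreira \cite[Proposition~1]{MM} and used as a black box in the proof of Theorem~\ref{thm1}. There is therefore no argument in the paper to compare yours against.

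Your argument is nonetheless correct and self-contained. The counting step is sound: if $P=B_0\cdots B_{N-1}$ is the length-$Nk_0$ prefix of some $w\in T_{\beta,k_0}$, then every block $B_i$ lies in the factor set $F=L_{k_0}(w)\subseteq\mathcal{A}^{k_0}$ with $|F|\le b^{\beta k_0}$, so the set of admissible prefixes is contained in $\bigcup_{|F|\le b^{\beta k_0}}F^{N}$ and the bound $2^{b^{k_0}}(b^{\beta k_0})^{N}$ follows. The subsequent Hausdorff-sum estimate and the use of countable stability over $k_0$ are routine. Your closing observation is also on point: the density of $S$ forces its upper box dimension to be $1$, so any successful proof must fix the block length $k_0$ (freezing the doubly-exponential constant $C(k_0)$) and refine only in $N$; this is exactly what distinguishes a Hausdorff-dimension argument from a box-dimension one here. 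In short, you have supplied a valid proof of a result the paper merely cites.
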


Given two  infinite words $u=u_0u_1\ldots$ and $v=v_0v_1\ldots$ over the alphabet $\{0,1\}$, we denote by $u+v$ the infinite word over the alphabet $\{0,1,2\}$ whose $i$-th letter is \linebreak $u_i+v_i$. Likewise, the sum of $m$ infinite binary words is an infinite word over the alphabet $\{0,1,\ldots,m\}\subseteq \mathcal{A}$ because $m\le n-1\le b-1$, where $m$ is the number of connected components of the gap set $G$ (which has the same number of connected components as $G_\delta$).

\begin{lemma}\label{u} Let $w^{(1)},\ldots, w^{(m)}$ be the binary words in the statement of Lemma \ref{oi}. Then, $w=w^{(1)}+\ldots+w^{(m)}$ is an infinite word
over the alphabet $\{0,1,\ldots,b-1\}$ with $E(w)=0$. 
\end{lemma}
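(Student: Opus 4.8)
The plan is to bound the complexity function $p_w(k)$ of the summed word by a polynomial in $k$, so that the logarithmic normalization defining $E(w)$ forces the limit to vanish.

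First I would record the purely combinatorial observation that controls how complexity behaves under letterwise addition. For each starting position $\ell \ge 0$ and each length $k \ge 1$, write $s_\ell^{(j)} = w_\ell^{(j)} w_{\ell+1}^{(j)} \cdots w_{\ell+k-1}^{(j)} \in L_k(w^{(j)})$ for the length-$k$ factor of $w^{(j)}$ beginning at $\ell$, and let $s_\ell = w_\ell w_{\ell+1}\cdots w_{\ell+k-1} \in L_k(w)$ be the corresponding factor of $w$. Since $w$ is defined letter by letter through $w_i = w_i^{(1)} + \cdots + w_i^{(m)}$, the factor $s_\ell$ is exactly the letterwise sum of the factors $s_\ell^{(1)}, \ldots, s_\ell^{(m)}$ read at the \emph{same} index $\ell$. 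Consequently the assignment $(s_\ell^{(1)}, \ldots, s_\ell^{(m)}) \mapsto s_\ell$ is well defined, and the factor map $\ell \mapsto s_\ell$ of $w$ factors through $\ell \mapsto (s_\ell^{(1)}, \ldots, s_\ell^{(m)})$. This yields the key inequality
$$ p_w(k) = \# L_k(w) \le \#\{(s_\ell^{(1)}, \ldots, s_\ell^{(m)}) : \ell \ge 0\} \le \prod_{j=1}^m \# L_k(w^{(j)}) = \prod_{j=1}^m p_{w^{(j)}}(k). $$

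Next I would insert the complexity bound from Lemma \ref{oi}, namely $p_{w^{(j)}}(k) \le (n-1)k+2$ for every $k \ge 1$ and every $1 \le j \le m$. Combined with the inequality above this gives $p_w(k) \le \big((n-1)k+2\big)^m$ for all $k \ge 1$. Taking base-$b$ logarithms and dividing by $k$,
$$ 0 \le \frac{\log_b\big(p_w(k)\big)}{k} \le \frac{m\,\log_b\big((n-1)k+2\big)}{k}, $$
and since the right-hand side tends to $0$ as $k \to \infty$, the squeeze gives $E(w) = \lim_{k\to\infty} k^{-1}\log_b p_w(k) = 0$. I would close by recalling, as noted just before the statement, that $w$ takes its values in $\mathcal{A} = \{0,1,\ldots,b-1\}$ because $m \le n-1 \le b-1$, so that $E(w)$ is genuinely the entropy in the sense required by Theorem \ref{tmi}.

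The only step carrying any content is the product bound $p_w(k) \le \prod_{j=1}^m p_{w^{(j)}}(k)$; everything else is routine. I do not expect a real obstacle, since the synchronization of factors is immediate from the letterwise definition of the sum. The single point to phrase carefully is that the factors of the summands must be read starting at the \emph{same} index $\ell$ as the factor of $w$ — it is this alignment, rather than any independent choice of a factor from each $w^{(j)}$, that makes the factor map of $w$ factor through the tuple map and delivers the product bound.
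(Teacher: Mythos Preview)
Your proof is correct and follows the same route as the paper: bound $p_w(k)$ by $\prod_{j=1}^m p_{w^{(j)}}(k)\le\big((n-1)k+2\big)^m$ via the letterwise-sum map, then take logarithms to force $E(w)=0$. Your explicit remark about reading the factors at the same index $\ell$ is, if anything, a cleaner justification of the product bound than the paper's one-line appeal to surjectivity.
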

\begin{proof} Since $m\le b-1$ and also because each word $w^{(j)}$ is binary,
we have that $w=w^{(1)}+\ldots+w^{(m)}$ is an infinite word over the alphabet $\{0,1,\ldots,b-1\}$. The operation of adding $m$ words componentwisely yields a surjective map from
$L_k(w^{(1)})\times \ldots\times L_k(w^{(m)})$ onto  $L_k(w)$. Then, by Lemma \ref{oi}, for every $k\ge 1$,
$$p_{w}(k)\le \prod_{j=1}^m p_{w^{(j)}}(k)\le \left[(n-1) k +2 \right]^m.$$
Moreover,
$$ E(w)=\lim_{k\to\infty}\dfrac{\log_b \big(p_{w}(k)\big)}{k}\le m\dfrac{\log_b \big[(n-1)k+2 \big]}{k}=0.
$$
\end{proof}

\begin{proof}[Proof of Theorem \ref{thm1}] Let $\mathcal{A}=\{0,1,\ldots,b-1\}$ and
$$\mathcal{N}_1=\left\{\sum_{j=1}^m \sum_{i= 0}^\infty w_i^{(j)}b^{-i}: w^{(j)}=w_0^{(j)}w_1^{(j)}\ldots\in\{0,1\}^\mathbb{N}\,\,\mathrm{and}\,\, p_{w^{(j)}}(k)\le (n-1)k+2, \forall k,j
\right\}$$
It follows from Lemma \ref{u} that
$$\mathcal{N}_1\subset\left\{\sum_{i= 0}^\infty w_i b^{-i}: w=w_0w_1\ldots\in\mathcal{A}^\mathbb{N}\,\,\mathrm{and}\,\, E(w)=0
\right\}.$$
In other words,
$\mathcal{N}_1\subset\left\{b\rho(w):w\in\mathcal{A}^{\mathbb{N}}\,\,\,\mathrm{and}\,\,\, E(w)=0\right\}.$
By Theorem \ref{tmi}, the set $\mathcal{N}_1$ has Hausdorff dimension $0$. By Lemma
\ref{oi},  since $\vert G^{(j)}\vert=L$ for all $1\le j\le m$ (see Definition \ref{def1}), we have that
$\mathcal{N}\subset -d_1+L\mathcal{N}_1$. In this way, $\mathcal{N}$ has Hausdorff dimension $0$. 
\end{proof}


\begin{thebibliography}{30}

\bibitem{B1} Y. Bugeaud, Dynamique de certaines applications contractantes lin\'eaires par morceaux, sur [0, 1),  {\it C. R. Math. Acad. Sci. Paris} Ser. I Math. {\bf 317} (6) (1993) 575-578.

\bibitem{B2} {Y. Bugeaud, Linear mod one transformations and the distribution of fractional parts of ${\xi(p/q)^n}$}, {\it Acta Arith.} {\bf 114} (4) (2004) 301-311.

\bibitem{BC} Y. Bugeaud and J.-P. Conze. Calcul de la dynamique de transformations lin\'eaires contractantes mod 1 et arbre de Farey, {\it Acta Arith.} {\bf 88} (3) (1999) 201-218.

\bibitem{CGM} E. Catsigeras, P. Guiraud and A. Meyroneinc, Complexity of injective piecewise contracting interval maps, 
\textit{Ergodic Theory Dynam. Systems} (2018) 1-25, doi:10.1017/etds.2018.34

\bibitem{FZ} S. Ferenczi and L. Q. Zamboni, Languages of k-interval exchange transformations,
 {\it Bull. London Math. Soc.} {\bf 40} (2008) 705-714.

\bibitem{JO} S. Janson and A. \"Oberg, A piecewise contractive dynamical system and election methods, https://arxiv.org/pdf/1709.06398.pdf

\bibitem{MK} M. Keane, Interval exchange transformations, {\it Math Z} {\bf 141}(1)(1975) 25-31. 

\bibitem{LNo} M. Laurent and A. Nogueira, Rotation number of interval contracted rotations, {\it Journal of Modern Dynamics} \textbf{12}(2018).

\bibitem{MM} C. Mauduit and C. G. Moreira, Generalized Hausdorff dimensions of sets of real numbers with zero entropy expansion, 
\textit{Ergodic Theory Dynam. Systems} {\bf 32} (2012) 1073-1089.


\bibitem{NPR1} A. Nogueira, B. Pires and R. Rosales, {Topological dynamics of piecewise $\lambda$-affine maps},
{\it Ergodic Theory Dynam. Systems}, \textbf{38}(5) (2018) 1876-1893. 

\bibitem{BP2} B. Pires, Invariant measures for piecewise continuous maps, {\it C. R. Acad. Sci. Paris} Ser. I Math, {\bf 354} (2016) 717-722.

\bibitem{BP} B. Pires, Symbolic dynamics of piecewise contractions, submitted, \url{https://arxiv.org/pdf/1803.01226.pdf}

\end{thebibliography}
\end{document}